\newtheorem{theorem}{Theorem}[section]
\numberwithin{equation}{section}
\newcommand{\res}[1]
{\textrm{\rm Res}_{#1}}
\newcommand{\cycle}[2]
{\genfrac{[}{]}{0pt}{}{#1}{#2}}
\title{Some Weighted Generalized Fibonacci Number Summation Identities, Part 1}
\author{M.J. Kronenburg}
\date{}
\begin{document}

\maketitle

\begin{abstract}
The Fibonacci number is the residue of a rational function,
from which follows that Fibonacci number summation identities can be derived with the
integral representation method, a method also used to derive combinatorial identities.
A number of weighted generalized Fibonacci number summation identities
are derived this way.
From these identities some infinite series, generating functions
and convolution identities are obtained.
In addition, some weighted generalized Fibonacci number summation identities
with binomial coefficients are derived.
Many examples of both types of summation identities are provided.
\end{abstract}

\noindent
\textbf{Keywords}: Fibonacci number, Lucas number, generalized Fibonacci number.\\
\textbf{MSC 2010}: 11B39

\section{Weighted generalized Fibonacci number\\ summation identities}

Let $F_n$ be the Fibonacci number, $L_n$ be the Lucas number, and $G_n$ be the generalized Fibonacci number
for which $G_{n+2}=G_{n+1}+G_n$ with any seed $G_0$ and $G_1$ \cite{BQ03,CW00,V89}.
The following weighted generalized Fibonacci number summation identities are derived.
\begin{equation}\label{res1}
 \Delta = 1-L_b x+(-1)^b x^2
\end{equation}
\begin{equation}\label{pnmres}
 P(v,w) = \sum_{k=0}^v \binom{v}{k} (-1)^{(b+1)k} x^k G_{a+bw-bk}
\end{equation}
\begin{equation}\label{res2}
 \sum_{k=0}^n x^k G_{a+bk} = -\frac{1}{\Delta}[ x^{n+1} P(1,n+1) - P(1,0) ]
\end{equation}
\begin{equation}\label{res3}
 \sum_{k=0}^n x^k k G_{a+bk} = -\frac{1}{\Delta}(n+1)x^{n+1} P(1,n+1)
  - \frac{1}{\Delta^2} [ x^{n+2} P(2,n+2) - x P(2,1) ]
\end{equation}
\begin{equation}\label{res4}
\begin{split}
  \sum_{k=0}^n x^k k^2 G_{a+bk} = & -\frac{1}{\Delta}(n+1)^2 x^{n+1} P(1,n+1) \\
  & - \frac{1}{\Delta^2} [ (2n+3) x^{n+2} P(2,n+2 ) - x P(2,1) ] \\
  & - \frac{2}{\Delta^3} [ x^{n+3} P(3,n+3) - x^2 P(3,2) ]  \\
\end{split}
\end{equation}
\begin{equation}\label{res5}
\begin{split}
  \sum_{k=0}^n x^k k^3 G_{a+bk} = & -\frac{1}{\Delta}(n+1)^3 x^{n+1} P(1,n+1) \\
  & - \frac{1}{\Delta^2} [ (3n^2+9n+7) x^{n+2} P(2,n+2 ) - x P(2,1) ] \\
  & - \frac{6}{\Delta^3} [ (n+2) x^{n+3} P(3,n+3) - x^2 P(3,2) ]  \\
  & - \frac{6}{\Delta^4} [  x^{n+4} P(4,n+4) - x^3 P(4,3) ]  \\
\end{split}
\end{equation}
\begin{equation}\label{res6}
\begin{split}
  \sum_{k=0}^n x^k k^4 G_{a+bk} = & -\frac{1}{\Delta}(n+1)^4 x^{n+1} P(1,n+1) \\
  & - \frac{1}{\Delta^2} [ (4n^3+18n^2+28n+15) x^{n+2} P(2,n+2 ) - x P(2,1) ] \\
  & - \frac{2}{\Delta^3} [ (6n^2+24n+25) x^{n+3} P(3,n+3) - 7 x^2 P(3,2) ]  \\
  & - \frac{12}{\Delta^4} [ (2n+5) x^{n+4} P(4,n+4) - 3 x^3 P(4,3) ]  \\
  & - \frac{24}{\Delta^5} [ x^{n+5} P(5,n+5) - x^4 P(5,4) ] \\
\end{split}
\end{equation}
\begin{equation}\label{res7}
\begin{split}
  \sum_{k=0}^n & x^k k^5 G_{a+bk} = -\frac{1}{\Delta}(n+1)^5 x^{n+1} P(1,n+1) \\
  & - \frac{1}{\Delta^2} [ (5n^4+30n^3+70n^2+75n+31) x^{n+2} P(2,n+2 ) - x P(2,1) ] \\
  & - \frac{10}{\Delta^3} [ (2n^3+12n^2+25n+18) x^{n+3} P(3,n+3) - 3 x^2 P(3,2) ]  \\
  & - \frac{30}{\Delta^4} [ (2n^2+10n+13) x^{n+4} P(4,n+4) - 5 x^3 P(4,3) ]  \\
  & - \frac{120}{\Delta^5} [ (n+3) x^{n+5} P(5,n+5) - 2 x^4 P(5,4) ] \\
  & - \frac{120}{\Delta^6} [ x^{n+6} P(6,n+6) - x^5 P(6,5) ] \\
\end{split}
\end{equation}

\section{The Fibonacci Number is a Residue}

The residue of a function $f(x)$ at a pole $x_p$ of order $k$ is \cite{CB84,K08}:
\begin{equation}
 \res{x=x_p} f(x) = \frac{1}{(k-1)!} D_x^{k-1} [ (x-x_p)^k f(x) ]|_{x=x_p}
\end{equation}
where $D_x^n f(x)|_{x=x_p}$ is the n-th derivative of $f(x)$ at $x=x_p$.
From this follows that when $f(x)$ does not have a pole at $x=x_p$, then:
\begin{equation}\label{respdef}
 \res{x=x_p} \frac{f(x)}{(x-x_p)^k} = \frac{1}{(k-1)!} D_x^{k-1} f(x)|_{x=x_p}
\end{equation}
and this residue is zero when $k\leq 0$.
A definition that is needed here is that the residue $\res{x}$ is the sum of the residues
over all poles:
\begin{equation}
 \res{x}f(x) = \sum_{p=1}^{npoles} \res{x=x_p}f(x)
\end{equation}
This definition is not only needed for Fibonacci summation identities but also
for certain combinatorial identities.
When $f(x)$ is a constant, then residue (\ref{respdef}) is only
nonzero when $k=1$. Then using:
\begin{equation}
 (1+x)^n = \sum_{k=0}^n \binom{n}{k} x^k
\end{equation}
it is clear that:
\begin{equation}\label{binomdef}
 \binom{n}{k} = \res{x} \frac{(1+x)^n}{x^{k+1}}
\end{equation}
From this identity many combinatorial identities can be derived \cite{E84}.\\
The Fibonacci and Lucas numbers can be defined by their Binet formulas,
where $\phi$ is the golden ratio:
\begin{equation}
 \phi=\frac{1}{2}(1+\sqrt{5})
\end{equation}
\begin{equation}\label{fibodef}
 F_n = \frac{1}{\sqrt{5}} ( \phi^n - (1-\phi)^n )
\end{equation}
\begin{equation}\label{lucasdef}
 L_n = \phi^n + (1-\phi)^n
\end{equation}
The Fibonacci number is also the residue of a rational function.
\begin{theorem}
\begin{equation}\label{fibdef}
 F_n = \res{x} \frac{(1+x)^n}{(x+\phi)(x-1/\phi)}
\end{equation}
\end{theorem}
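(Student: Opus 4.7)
The plan is to verify the identity by direct computation of the residues at the two poles and showing the sum matches the Binet formula (\ref{fibodef}).

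First I would observe that the denominator factors cleanly because of standard identities for the golden ratio. Since $\phi$ satisfies $\phi^2=\phi+1$, one has $1/\phi=\phi-1$ and therefore $1-\phi=-1/\phi$. This means the rational function has two simple poles, at $x=-\phi$ and $x=1/\phi$, and I would apply (\ref{respdef}) with $k=1$ at each pole separately.

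At each pole the residue is just the value of $(1+x)^n$ divided by the derivative of the other linear factor evaluated at the pole. The key algebraic simplifications I would carry out are: $1+1/\phi=\phi$ (so the numerator at $x=1/\phi$ becomes $\phi^n$), $1+(-\phi)=1-\phi$ (so the numerator at $x=-\phi$ becomes $(1-\phi)^n$), and $\phi+1/\phi=2\phi-1=\sqrt{5}$ (so the denominators produce the $\pm\sqrt{5}$ factors). Summing the two residues then gives
\begin{equation}
\frac{\phi^n}{\sqrt{5}}-\frac{(1-\phi)^n}{\sqrt{5}},
\end{equation}
which is exactly $F_n$ by (\ref{fibodef}).

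There is no serious obstacle; the whole argument is a one-line application of the simple-pole residue formula (\ref{respdef}) combined with the three elementary $\phi$-identities above. The only place one has to be a little careful is signs: the derivative of $(x+\phi)$ at $x=1/\phi$ is $+1$, while the derivative of $(x-1/\phi)$ at $x=-\phi$ equals $-(\phi+1/\phi)=-\sqrt{5}$, which is what produces the minus sign that turns the sum into the Binet difference rather than a sum.
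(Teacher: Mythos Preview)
Your proof is correct and follows exactly the paper's approach: compute the two simple-pole residues via (\ref{respdef}), simplify with $1+1/\phi=\phi$ and $\phi+1/\phi=\sqrt{5}$, and recognize the Binet formula (\ref{fibodef}). One small wording slip in your last paragraph: you write ``the derivative of $(x-1/\phi)$ at $x=-\phi$ equals $-\sqrt{5}$,'' but the derivative of any linear factor is $1$; what equals $-\sqrt{5}$ is the \emph{value} of that surviving factor at the pole (equivalently, the derivative of the full denominator there), and with that correction your sign bookkeeping is exactly right.
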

\begin{proof}
Using $1+1/\phi=\phi$ and $\phi+1/\phi=\sqrt{5}$:
\begin{equation}
\begin{split}
  & \res{x} \frac{(1+x)^n}{(x+\phi)(x-1/\phi)} \\
 = & \res{x=1/\phi} \frac{(1+x)^n}{(x+\phi)(x-1/\phi)} + \res{x=-\phi} \frac{(1+x)^n}{(x+\phi)(x-1/\phi)} \\
 = & \frac{(1+1/\phi)^n}{\phi+1/\phi} + \frac{(1-\phi)^n}{-\phi-1/\phi} \\
 = & \frac{1}{\sqrt{5}} ( \phi^n - (1-\phi)^n ) \\
 = & F_n
\end{split}
\end{equation}
\end{proof}
Many Fibonacci summation identities can now be derived
using the geometric series.

\section{The Geometric Series}

The finite geometric series below is easily proved with induction on $n$:
\begin{equation}\label{fingeom0}
 \sum_{k=0}^n x^k = \frac{x^{n+1}-1}{x-1}
\end{equation}
This identity is true for $n=0$, and when it is true for $n$, then the identity for $n+1$ becomes:
\begin{equation}
 \sum_{k=0}^{n+1} x^k = \sum_{k=0}^n x^k + x^{n+1} = \frac{x^{n+1}-1}{x-1} + \frac{x^{n+1}(x-1)}{x-1}
 = \frac{x^{n+2}-1}{x-1}
\end{equation}
which makes it true for $n+1$, so it is true for all $n$.
For evaluating the generalized Fibonacci summation identities,
the following modified finite geometric series is needed:
\begin{equation}
 S(m) = \sum_{k=0}^n k^m x^k
\end{equation}
The following follows from (\ref{fingeom0}):
\begin{equation}
 \sum_{k=0}^n x^k \prod_{j=0}^{m-1} (k-j) = x^m \sum_{k=0}^n x^{k-m} \prod_{j=0}^{m-1} (k-j) = x^m D_x^m \frac{x^{n+1}-1}{x-1}
\end{equation}
Now the following is used:
\begin{equation}
 \prod_{j=0}^{m-1}(k-j) = \sum_{j=0}^{m}(-1)^{m-j}\cycle{m}{j}k^j
\end{equation}
where $\cycle{a}{b}$ is the Stirling number of the first kind \cite{GKP94}.
Then by changing the order of summation it is clear that:
\begin{equation}
 S(m) = x^m D_x^m \frac{x^{n+1}-1}{x-1} - \sum_{k=0}^{m-1} (-1)^{m-k} \cycle{m}{k} S(k)
\end{equation}
The corresponding Mathematica$^{\textregistered}$ \cite{W03} program:
\begin{alltt}
S[0]=(x^(n+1)-1)/(x-1);
S[m_]:=S[m]=Collect[x^m D[(x^(n+1)-1)/(x-1),\{x,m\}]
 -Sum[StirlingS1[m,k]S[k],\{k,0,m-1\}],x-1,Simplify]
\end{alltt}
With this computer program the following modified finite geometric series
can be easily computed:
\begin{equation}\label{fingeom1}
 \sum_{k=0}^n k x^k = \frac{(n+1)x^{n+1}}{x-1} - \frac{x^{n+2}-x}{(x-1)^2}
\end{equation}
\begin{equation}\label{fingeom2}
 \sum_{k=0}^n k^2 x^k = \frac{(n+1)^2 x^{n+1}}{x-1} - \frac{(2n+3)x^{n+2}-x}{(x-1)^2}
  + 2 \frac{x^{n+3}-x^2}{(x-1)^3}
\end{equation}
\begin{equation}\label{fingeom3}
\begin{split}
 \sum_{k=0}^n k^3 x^k = & \frac{(n+1)^3 x^{n+1}}{x-1} - \frac{(3n^2+9n+7)x^{n+2}-x}{(x-1)^2} \\
   & + 6 \frac{(n+2)x^{n+3}-x^2}{(x-1)^3} - 6 \frac{x^{n+4}-x^3}{(x-1)^4} \\
\end{split}
\end{equation}
\begin{equation}\label{fingeom4}
\begin{split}
 \sum_{k=0}^n & k^4 x^k = \frac{(n+1)^4 x^{n+1}}{x-1} - \frac{(4n^3+18n^2+28n+15)x^{n+2}-x}{(x-1)^2} \\
   & + 2 \frac{(6n^2+24n+25)x^{n+3}-7x^2}{(x-1)^3} - 12 \frac{(2n+5)x^{n+4}-3x^3}{(x-1)^4} + 24 \frac{x^{n+5}-x^4}{(x-1)^5} \\
\end{split}
\end{equation}
\begin{equation}\label{fingeom5}
\begin{split}
 & \sum_{k=0}^n k^5 x^k = \frac{(n+1)^5 x^{n+1}}{x-1} - \frac{(5n^4+30n^3+70n^2+75n+31)x^{n+2}-x}{(x-1)^2} \\
   & + 10 \frac{(2n^3+12n^2+25n+18)x^{n+3}-3x^2}{(x-1)^3} - 30 \frac{(2n^2+10n+13)x^{n+4}-5x^3}{(x-1)^4} \\
   &  + 120 \frac{(n+3)x^{n+5}-2x^4}{(x-1)^5} - 120 \frac{x^{n+6}-x^5}{(x-1)^6} \\
\end{split}
\end{equation}
The corresponding (modified) infinite geometric series is obtained by taking $|x|<1$ and letting $n\to\infty$:
\begin{equation}\label{infgeom0}
 \sum_{k=0}^{\infty} x^k = \frac{-1}{x-1}
\end{equation}
\begin{equation}\label{infgeom1}
 \sum_{k=0}^{\infty} k x^k = \frac{x}{(x-1)^2}
\end{equation}
\begin{equation}\label{infgeom2}
 \sum_{k=0}^{\infty} k^2 x^k = \frac{x}{(x-1)^2} - \frac{2x^2}{(x-1)^3}
\end{equation}
\begin{equation}\label{infgeom3}
 \sum_{k=0}^{\infty} k^3 x^k = \frac{x}{(x-1)^2} - \frac{6x^2}{(x-1)^3} + \frac{6x^3}{(x-1)^4}
\end{equation}
\begin{equation}\label{infgeom4}
 \sum_{k=0}^{\infty} k^4 x^k = \frac{x}{(x-1)^2} - \frac{14x^2}{(x-1)^3} + \frac{36x^3}{(x-1)^4} - \frac{24x^4}{(x-1)^5}
\end{equation}
\begin{equation}\label{infgeom5}
 \sum_{k=0}^{\infty} k^5 x^k = \frac{x}{(x-1)^2} - \frac{30x^2}{(x-1)^3} + \frac{150x^3}{(x-1)^4} - \frac{240x^4}{(x-1)^5} + \frac{120x^5}{(x-1)^6}
\end{equation}

\section{A Few Simple Examples}

A few simple examples using the above formulas are given,
demonstrating that the integral representation method can be used
both for combinatorial and for Fibonacci summation identities.
As a simple combinatorial identity the following sum is evaluated:
\begin{equation}
 \sum_{k=0}^n k \binom{n}{k} = \sum_{k=0}^n k \res{x} \frac{(1+x)^n}{x^{k+1}}
\end{equation}
Because the summand is zero when $k>n$ the modified infinite geometric series (\ref{infgeom1}) can be used,
resulting in:
\begin{equation}
 \sum_{k=0}^{\infty}  k \res{x} \frac{(1+x)^n}{x^{k+1}} = \res{x} \frac{(1+x)^n}{x} \sum_{k=0}^{\infty} k (\frac{1}{x})^k
 = \res{x} \frac{(1+x)^n}{(x-1)^2}
\end{equation}
With (\ref{respdef}) the result is:
\begin{equation}
 \sum_{k=0}^n k \binom{n}{k} = n 2^{n-1}
\end{equation}
As a simple Fibonacci summation identity the following sum is evaluated:
\begin{equation}
 \sum_{k=0}^n F_{p+k} = \sum_{k=0}^n \res{x} \frac{(1+x)^{p+k}}{(x+\phi)(x-1/\phi)}
\end{equation}
In this case the summand is not zero for $k>n$, and therefore the finite geometric series (\ref{fingeom0}) must be used:
\begin{equation}
 \res{x} \frac{(1+x)^p}{(x+\phi)(x-1/\phi)} \sum_{k=0}^n (1+x)^k
 = \res{x} \frac{(1+x)^{p+n+1}-(1+x)^p}{x(x+\phi)(x-1/\phi)}
\end{equation}
There are three poles, but the residue for $x=0$ is zero as the numerator for $x=0$ is zero.
Summing the remaining two residues for $x=1/\phi$ and $x=-\phi$, and using $1+1/\phi=\phi$, $\phi+1/\phi=\sqrt{5}$,
and $-1/\phi=1-\phi$, and the definition of the Fibonacci number in (\ref{fibodef}), the result is:
\begin{equation}
 \sum_{k=0}^n F_{p+k} = F_{p+n+2} - F_{p+1}
\end{equation}
As a final simple example, the following identity is computed:
\begin{equation}
 \sum_{k=0}^n \binom{n}{k} F_{p+k} = \sum_{k=0}^n \res{x}\res{y} \frac{(1+x)^n(1+y)^{p+k}}{x^{k+1}(y+\phi)(y-1/\phi)}
\end{equation}
In this case the summand is zero for $k>n$, so the infinite geometric series (\ref{infgeom0}) can be used:
\begin{equation}
 \res{x}\res{y} \frac{(1+x)^n(1+y)^p}{x(y+\phi)(y-1/\phi)} \sum_{k=0}^{\infty} \left(\frac{1+y}{x}\right)^k
\end{equation}
resulting in:
\begin{equation}
 \res{y}\res{x} \frac{(1+x)^n(1+y)^p}{(y+\phi)(y-1/\phi)(x-(1+y))}
 = \res{y} \frac{(2+y)^n(1+y)^p}{(y+\phi)(y-1/\phi)}
\end{equation}
Now using $1+1/\phi=\phi$, $\phi+1/\phi=\sqrt{5}$, $2+1/\phi=\phi^2$ and $2-\phi=(1-\phi)^2$:
\begin{equation}
 \sum_{k=0}^n \binom{n}{k} F_{p+k} = F_{p+2n}
\end{equation}

\section{Derivation of the Summation Identities}

The following sum is to be evaluated:
\begin{equation}
\begin{split}
 \sum_{k=0}^n x^k F_{a+bk} & = \sum_{k=0}^n x^k \res{y} \frac{(1+y)^{a+bk}}{(y+\phi)(y-1/\phi)} \\
 & = \res{y} \frac{(1+y)^a}{(y+\phi)(y-1/\phi)} \sum_{k=0}^n (x(1+y)^b)^k
\end{split}
\end{equation}
which with (\ref{fingeom0}) becomes:
\begin{equation}\label{eval1}
  \res{y} \frac{(1+y)^a((x(1+y)^b)^{n+1}-1)}{(y+\phi)(y-1/\phi)(x(1+y)^b-1)}
\end{equation}
Taking the residues at $y=1/\phi$ and $y=-\phi$, and using $1+1/\phi=\phi$,
the following theorem is needed.
\begin{theorem}
\begin{equation}\label{resdef1}
 \frac{1}{x\phi^b-1} = \frac{x(F_{b+1}-F_b\phi)-1}{1-L_bx+(-1)^bx^2}
\end{equation}
\begin{equation}\label{resdef2}
 \frac{1}{x(1-\phi)^b-1} = \frac{x(F_{b+1}-F_b(1-\phi))-1}{1-L_bx+(-1)^bx^2}
\end{equation} 
\end{theorem}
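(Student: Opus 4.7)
\medskip
\noindent\textbf{Proof proposal.} My plan is to clear denominators and verify each identity as a polynomial identity in $x$ of degree two, comparing coefficients via the Binet formulas. Writing $\psi = 1-\phi$, I will use the standard identities $\phi\psi = -1$, $\phi - \psi = \sqrt{5}$, together with $F_n = (\phi^n-\psi^n)/\sqrt{5}$ and $L_n = \phi^n+\psi^n$. These are the only facts I expect to need.

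For the first identity, the plan is to show
\begin{equation*}
 (x\phi^b - 1)\bigl(x(F_{b+1}-F_b\phi) - 1\bigr) = 1 - L_b x + (-1)^b x^2,
\end{equation*}
and read off the three coefficients separately. The constant term is trivially $1$. For the coefficient of $x$ one has $-\phi^b - F_{b+1} + F_b\phi$; substituting Binet and simplifying, the $\phi^{b+1}$ terms cancel and one is left with $-\phi^b - \psi^b(\phi-\psi)/\sqrt{5} = -(\phi^b+\psi^b) = -L_b$, as required. For the coefficient of $x^2$ one gets $\phi^b F_{b+1} - F_b \phi^{b+1}$, which after applying Binet telescopes to $(\phi\psi)^b(\phi-\psi)/\sqrt{5} = (\phi\psi)^b = (-1)^b$. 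Each of these reductions is a short manipulation, so the whole verification is mechanical.

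The second identity follows by the symmetry of the setup: the Binet expressions for $F_n$, $L_n$, the product $\phi\psi = -1$, and the difference $\phi - \psi$ up to sign are all invariant under the involution $\phi \leftrightarrow \psi$. Hence the argument for the first identity applied with $\phi$ and $\psi$ interchanged proves the second identity verbatim (with $\phi$ replaced by $1-\phi$ in the numerator), since the quadratic denominator $1 - L_b x + (-1)^b x^2$ is manifestly symmetric in $\phi,\psi$.

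I do not foresee a real obstacle; the only subtle point is the $x^2$ coefficient, where one must recognize that the apparently asymmetric combination $\phi^b F_{b+1} - F_b\phi^{b+1}$ factors through $\phi\psi = -1$ to give the clean answer $(-1)^b$. Everything else is bookkeeping with Binet.
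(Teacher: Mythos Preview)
Your proposal is correct. Clearing denominators and matching the three coefficients of the resulting quadratic in $x$ via Binet is entirely sound; the computations you sketch all check out, and the $\phi\leftrightarrow\psi$ symmetry does give the second identity for free.

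This is, however, a genuinely different route from the paper's. The paper does not verify the identity directly but instead poses and solves the more general problem of writing $1/(v+w\phi^n)$ in the form $V+W\phi$ and $1/(v+w(1-\phi)^n)$ in the form $V+W(1-\phi)$ for unknown $V,W$. It manipulates the reciprocals of these two equations, adds and subtracts them, introduces the auxiliary ratio $z=V/W$, and then invokes Cassini's identity $F_{n+1}F_{n-1}-F_n^2=(-1)^n$ together with $2F_{n+1}-F_n=L_n$ to close the computation, finally specializing to $v=-1$, $w=x$, $n=b$. Your approach is considerably shorter and more transparent for the stated theorem; the paper's approach buys a general formula for $V,W$ valid for arbitrary $v,w$, and in particular makes the structural role of Cassini's identity in producing the $(-1)^b x^2$ term explicit, whereas in your argument the same $(-1)^b$ emerges directly from $\phi\psi=-1$ without naming Cassini.
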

\begin{proof}
Given $v$ and $w$ the variables $V$ and $W$ are solved
in the following two equations, using $1-\phi=-1/\phi$:
\begin{equation}
 \frac{1}{v+w\phi^n} = V + W\phi
\end{equation}
\begin{equation}
 \frac{1}{v+w(1-\phi)^n} = V + W(1-\phi) = V - \frac{W}{\phi}
\end{equation}
Taking the reciprocal and then subtracting these two equations
and using $\phi-1/\phi=1$:
\begin{equation}
 w ( \phi^n - (1-\phi)^n ) = \frac{1}{V+W\phi} - \frac{1}{V-W/\phi} 
 = \frac{-W\sqrt{5}}{V^2-W^2+VW}
\end{equation}
and adding:
\begin{equation}
 w ( \phi^n + (1-\phi)^n ) + 2v = \frac{1}{V+W\phi} + \frac{1}{V-W/\phi} 
 = \frac{2V+W}{V^2-W^2+VW} 
\end{equation}
Substituting the definition of the Fibonacci and Lucas numbers (\ref{fibodef}) and (\ref{lucasdef}):
\begin{equation}
 F_n = \frac{-W}{w(V^2-W^2+VW)}
\end{equation}
\begin{equation}
 wL_n + 2v = \frac{2V+W}{V^2-W^2+VW}
\end{equation}
Dividing these two equations and using $F_n+L_n=2F_{n+1}$:
\begin{equation}
 \frac{V}{W} = z = -\frac{w(F_n+L_n)+2v}{2wF_n} = -\frac{F_{n+1}+v/w}{F_n}
\end{equation}
The following is easily checked:
\begin{equation}
 V^2-W^2+VW = W^2 (z^2+z-1)
\end{equation}
and the $V$ and $W$ become:
\begin{equation}
 W = \frac{-1}{w(z^2+z-1)F_n}
\end{equation}
\begin{equation}
 V = z W
\end{equation}
Now $z^2+z-1$ is easily evaluated:
\begin{equation}
\begin{split}
 z^2+z-1 & = \frac{(F_{n+1}+v/w)^2-F_n(F_{n+1}+v/w)-F_n^2}{F_n^2} \\
  & = \frac{F_{n+1}^2-F_nF_{n+1}-F_n^2 + (2F_{n+1}-F_n)v/w + (v/w)^2}{F_n^2}
\end{split}
\end{equation}
The first term in the numerator can be simplified using Cassini's identity:
\begin{equation}
 F_{n+1}^2-F_nF_{n+1}-F_n^2 = F_{n+1}F_{n-1}-F_n^2 = (-1)^n 
\end{equation}
and as before $2F_{n+1}-F_n=L_n$ which results in:
\begin{equation}
 z^2+z-1 = \frac{(v/w)^2+L_nv/w+(-1)^n}{F_n^2}
\end{equation}
and now $V$ and $W$ are solved:
\begin{equation}
 W = \frac{-wF_n}{v^2+L_nvw+(-1)^nw^2}
\end{equation}
\begin{equation}
 V = zW = \frac{wF_{n+1}+v}{v^2+L_nvw+(-1)^nw^2}
\end{equation}
Taking $v=-1$, $w=x$ and $n=b$, the theorem is proved.
\end{proof}
This result can be simplified with the following theorem.
\begin{theorem}
\begin{equation}\label{ressimp1}
  F_{b+1} - F_b \phi = (-1)^b \phi^{-b}
\end{equation}
\begin{equation}\label{ressimp2}
  F_{b+1} - F_b (1-\phi) = (-1)^b (1-\phi)^{-b}
\end{equation}
\end{theorem}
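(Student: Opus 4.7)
The plan is to substitute the Binet formulas (\ref{fibodef}) for $F_{b+1}$ and $F_b$ directly, collapse the resulting expression using the two algebraic facts $\phi-(1-\phi)=\sqrt{5}$ and $\phi(1-\phi)=-1$ (the latter coming from $\phi^2=\phi+1$), and then rewrite the outcome in the form indicated on the right-hand side.

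For (\ref{ressimp1}), I would write
\begin{equation*}
 F_{b+1}-F_b\phi = \tfrac{1}{\sqrt{5}}\bigl(\phi^{b+1}-(1-\phi)^{b+1}\bigr)-\tfrac{\phi}{\sqrt{5}}\bigl(\phi^b-(1-\phi)^b\bigr),
\end{equation*}
and observe that the $\phi^{b+1}$ terms cancel, leaving
\begin{equation*}
 F_{b+1}-F_b\phi = \tfrac{(1-\phi)^b}{\sqrt{5}}\bigl(\phi-(1-\phi)\bigr) = (1-\phi)^b.
\end{equation*}
Then using $\phi(1-\phi)=-1$, i.e.\ $1-\phi = -\phi^{-1}$, gives $(1-\phi)^b=(-1)^b\phi^{-b}$, which is the desired identity. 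The second identity (\ref{ressimp2}) is proved by the symmetric computation: the $(1-\phi)^{b+1}$ terms cancel, leaving $F_{b+1}-F_b(1-\phi)=\phi^b$, and then $\phi = -(1-\phi)^{-1}$ converts this to $(-1)^b(1-\phi)^{-b}$.

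There is no real obstacle here — the whole thing is a short Binet calculation — but the one place to be careful is keeping track of the exponent parity when replacing $1-\phi$ by $-\phi^{-1}$ (and vice versa), since that is exactly where the $(-1)^b$ factor comes from. Everything else is cancellation and the pair of identities $\phi-(1-\phi)=\sqrt{5}$ and $\phi(1-\phi)=-1$ that were already used earlier in the paper.
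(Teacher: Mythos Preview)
Your proof is correct and essentially identical to the paper's own argument: substitute the Binet formulas, cancel the $\phi^{b+1}$ (respectively $(1-\phi)^{b+1}$) terms, use $\phi-(1-\phi)=2\phi-1=\sqrt{5}$ to clear the $1/\sqrt{5}$, and finish with $1-\phi=-1/\phi$ to produce the $(-1)^b$ factor. The only cosmetic difference is that the paper writes the key simplification as $2\phi-1=\sqrt{5}$ rather than $\phi-(1-\phi)=\sqrt{5}$, and merely remarks that the second identity is similar where you spell it out.
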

\begin{proof}
Using $2\phi-1=\sqrt{5}$ and $1-\phi=-1/\phi$:
\begin{equation}
\begin{split}
 & F_{b+1} - F_b \phi \\
 = & \frac{1}{\sqrt{5}} [ \phi^{b+1} - (1-\phi)^{b+1} - ( \phi^b - (1-\phi)^b ) \phi ] \\
 = & \frac{1}{\sqrt{5}} [ \phi (1-\phi)^b - (1-\phi)^{b+1} ] \\
 = & \frac{1}{\sqrt{5}} (1-\phi)^b ( 2\phi - 1 ) \\
 = & (1-\phi)^b \\
 = & (-1)^b \phi^{-b} \\
\end{split}
\end{equation}
The proof of the second identity is similar.
\end{proof}
Now taking the residue for $y=1/\phi$ in (\ref{eval1}) 
and using $1+1/\phi=\phi$ and $\phi+1/\phi=\sqrt{5}$ and (\ref{resdef1}) and (\ref{ressimp1}):
\begin{equation}\label{mult1}
 \frac{1}{\sqrt{5}} \frac{\phi^a (x^{n+1}\phi^{b(n+1)}-1)(x(-1)^b\phi^{-b}-1)}{1-L_b x+(-1)^b x^2}
\end{equation}
and the residue for $y=-\phi$ is the same with $\phi$ replaced by $1-\phi$ and a minus sign.
Multiplying out this result yields:
\begin{equation}
 \sum_{k=0}^n x^k F_{a+bk} = \frac{x^{n+1}[(-1)^bF_{a+bn}x - F_{a+b(n+1)}] - [(-1)^bF_{a-b}x - F_a]}
  {1-L_b x+(-1)^b x^2}
\end{equation}
From the earlier paper \cite{K18} equation (2.27):
\begin{equation}\label{gfldef}
 G_n = \frac{1}{2} [ ( G_{-1}+G_1 ) F_n + G_0 L_n ]
\end{equation}
and using $F_{a-1}+F_{a+1}=L_a$, it is concluded again that adding the identity for $a-1$ and $a+1$
makes the identity true for $L$ instead of $F$, and therefore for $G$, and so (\ref{res2}) has been derived.\\
A term $x^w/(x-1)^v$ in the modified finite geometric series (\ref{fingeom1}) to (\ref{fingeom5})
results in (omitting a factor $(-1)^v$ which is cancelled by the alternating signs of these terms):
\begin{equation}
  \Delta^{-v} \phi^a x^w \phi^{bw} ( 1 - x(-1)^b\phi^{-b} )^v = \Delta^{-v} x^w \sum_{k=0}^v \binom{v}{k} (-1)^{(b+1)k} x^k \phi^{a+bw-bk}
\end{equation}
and likewise with $\phi$ replaced by $1-\phi$, which results in the polynomial $P(v,w)$ in (\ref{pnmres}).
Given the corresponding modified finite geometric series
(which can be computed with the computer program)
the higher order formula can thus be written down immediately.

\section{Simplification of the Summation Identities}

The resulting formulas (\ref{res2}) to (\ref{res7}) are combinations of the polynomials (\ref{pnmres}):
\begin{equation}
 P(v,w) = \sum_{k=0}^v \binom{v}{k} (-1)^{(b+1)k} x^k G_{a+bw-bk}
\end{equation}
For general $x$ using (\ref{gfldef}) and the Binet formulas (\ref{fibodef}) and (\ref{lucasdef})
and $\phi(1-\phi)=-1$:
\begin{equation}
\begin{split}
 P(v,w) = \frac{1}{2} & \{ [ \frac{1}{\sqrt{5}}( G_{-1} + G_1 ) + G_0 ] \phi^{a+bw} ( 1 - x (1-\phi)^b )^v \\
  & - [ \frac{1}{\sqrt{5}}( G_{-1} + G_1 ) - G_0 ] (1-\phi)^{a+bw} ( 1 - x \phi^b )^v \} \\
\end{split}
\end{equation}
For some rational $x$ the polynomials can be simplified to one or two terms.
From the earlier paper \cite{K18} equations (1.1) to (1.4), where in this case $n=v$, $p=a+bw$ and $q=b$:
\begin{equation}\label{polsimp1}
 \sum_{k=0}^n \binom{n}{k} (-1)^{(q+1)k} \left((-1)^q\frac{F_m}{F_{m-q}}\right)^k G_{p-qk} = (-1)^{nm} \left(\frac{F_{-q}}{F_{m-q}}\right)^n G_{p-nm}
\end{equation}
\begin{equation}\label{polsimp2}
 \sum_{k=0}^n \binom{n}{k} (-1)^{(q+1)k} \left(\frac{F_m}{F_{m+q}}\right)^k G_{p-qk} = \left(\frac{F_{q}}{F_{m+q}}\right)^n G_{p+nm}
\end{equation}
\begin{equation}\label{polsimp3}
\begin{split}
 \sum_{k=0}^n & \binom{n}{k} (-1)^{(q+1)k} \left((-1)^q\frac{L_m}{L_{m-q}}\right)^k G_{p-qk} \\
 & = 5^{\lfloor n/2\rfloor} (-1)^{n(m+1)} \left(\frac{F_{-q}}{L_{m-q}}\right)^n [ G_{p-nm+1} - (-1)^n G_{p-nm-1} ] \\
\end{split}
\end{equation}
\begin{equation}\label{polsimp4}
\begin{split}
 \sum_{k=0}^n & \binom{n}{k} (-1)^{(q+1)k} \left(\frac{L_m}{L_{m+q}}\right)^k G_{p-qk} \\
 & = 5^{\lfloor n/2\rfloor} \left(\frac{F_q}{L_{m+q}}\right)^n [ G_{p+nm+1} - (-1)^n G_{p+nm-1} ] \\
\end{split}
\end{equation}
When an $m$ can be found for which $x=(-1)^qF_m/F_{m-q}$ or $x=F_m/F_{m+q}$ or $x=(-1)^qL_m/L_{m-q}$ or $x=L_m/L_{m+q}$,
then the corresponding formula replaces the polynomials by a single or double term.
For example when $x=1$ and $b=q=1$ then (\ref{polsimp2}) with $m=1$ yields $P(v,w)=G_{a+v+w}$,
and when $x=-1$ and $b=q=2$ then (\ref{polsimp4}) with $m=-1$ yields
$P(v,w)=5^{\lfloor v/2\rfloor}(G_{a+2w-v+1}-(-1)^vG_{a+2w-v-1})$.

\section{Derivation of Summation Identities with\\ Binomial Coefficients}

The following sum is to be evaluated, where an infinite sum can be taken
because the summand is zero for $k>n$:
\begin{equation}\label{binomeq}
\begin{split}
 \sum_{k=0}^n \binom{n}{k} x^k F_{a+bk} & = \sum_{k=0}^{\infty} \res{y} \frac{(1+y)^n}{y^{k+1}} x^k \res{z} \frac{(1+z)^{a+bk}}{(z+\phi)(z-1/\phi)} \\
 & = \res{y}\res{z} \frac{(1+y)^n(1+z)^a}{y(z+\phi)(z-1/\phi)} \sum_{k=0}^{\infty} (\frac{x(1+z)^b}{y})^k \\
 & = \res{z}\res{y} \frac{(1+y)^n(1+z)^a}{(z+\phi)(z-1/\phi)(y-x(1+z)^b)} \\
 & = \res{z} \frac{(1+x(1+z)^b)^n (1+z)^a}{(z+\phi)(z-1/\phi)} \\
\end{split}
\end{equation}
Taking the residues at $z=1/\phi$ and $z=-\phi$, an expression without summation only occurs when
for some $m$, using $1+1/\phi=\phi$:
\begin{equation}
 1 + x \phi^b = A \phi^m
\end{equation}
\begin{equation}
 1 + x (1-\phi)^b = B (1-\phi)^m
\end{equation}
This system of two equations cannot always
be solved, and two conditions are found for when it can be solved.
Adding and subtracting the two equations, the following results:
\begin{equation}
 2 + x L_b = \frac{\sqrt{5}}{2}(A-B) F_m + \frac{1}{2}(A+B) L_m 
\end{equation}
\begin{equation}
 x F_b = \frac{1}{2}(A+B) F_m + \frac{1}{2\sqrt{5}}(A-B) L_m
\end{equation}
This is a 2x2 matrix equation, and inverting this 2x2 matrix with:
\begin{equation}
\left(
\begin{matrix}
 a & b \\
 b & c \\
\end{matrix}
\right)^{-1}
= \frac{1}{b^2-ac}
\left(
\begin{matrix}
 -c & b \\
 b & -a \\
\end{matrix}
\right)
\end{equation}
results in:
\begin{equation}\label{binid1}
 F_m = \frac{1}{AB} [ \frac{1}{2}(A+B) x F_b - \frac{1}{2\sqrt{5}} (A-B) ( 2 + x L_b ) ]
\end{equation}
\begin{equation}\label{binid2}
 L_m = \frac{1}{AB} [ \frac{1}{2}(A+B) ( 2 + x L_b ) - \frac{\sqrt{5}}{2} (A-B) x F_b ]
\end{equation}
Two cases are considered, one where $A=B=C$ and one where $A=-B=D$.
When $A=B=C$, dividing the two equations yields:
\begin{equation}
 \frac{F_m}{L_m} = \frac{x F_b}{2 + x L_b} 
\end{equation}
Multiplying this equation out yields:
\begin{equation}
 x ( F_b L_m - L_b F_m ) = 2 F_m
\end{equation}
Using the well known identity \cite{K18,V89}:
\begin{equation}
 F_n L_m - L_n F_m = 2 (-1)^{n+1} F_{m-n}
\end{equation}
this equation simplifies to:
\begin{equation}\label{crit1}
 (-1)^b x F_{m-b} = - F_m
\end{equation}
When an $m$ can be found that fulfills this equation, then (\ref{binid1}) gives:
\begin{equation}\label{cval}
 C = x \frac{F_b}{F_m}
\end{equation}
When $A=-B=D$, dividing the two equations yields:
\begin{equation}
  \frac{F_m}{L_m} = \frac{2 + x L_b}{5 x F_b}
\end{equation}
Multiplying this equation out yields:
\begin{equation}
 x ( 5 F_b F_m - L_b L_m ) = 2 L_m
\end{equation}
Using the identity \cite{K18}:
\begin{equation}
 5 F_n F_m - L_n L_m = 2 (-1)^{n+1} L_{m-n}
\end{equation}
this equation simplifies to:
\begin{equation}\label{crit2}
 (-1)^b x L_{m-b} = - L_m
\end{equation}
When an $m$ can be found that fulfills this equation, then (\ref{binid2}) gives:
\begin{equation}\label{dval}
 D = x \sqrt{5} \frac{F_b}{L_m}
\end{equation}
For example when $x=2$ and $b=1$ then (\ref{crit1}) is solved with $m=3$ and (\ref{cval}) gives $C=1$
and we have derived:
\begin{equation}
 1 + 2\phi = \phi^3
\end{equation}
\begin{equation}
 1 + 2(1-\phi) = (1-\phi)^3
\end{equation}
and (\ref{binomeq}) yields the known identity (\ref{benjamin1}) \cite{BQ03}.
When $x=1$ and $b=2$ then (\ref{crit1}) cannot be solved, but (\ref{crit2}) can be solved with  $m=1$,
and (\ref{dval}) gives $D=\sqrt{5}$ and we have derived:
\begin{equation}
 1 + \phi^2 = \sqrt{5} \phi
\end{equation}
\begin{equation}
 1 + (1-\phi)^2 = -\sqrt{5} (1-\phi)
\end{equation}

\section{Infinite Series and Generating Functions}

When $n\rightarrow\infty$ then from (\ref{gfldef}) follows that $G_{a+bn}$ is $O(\phi^{|b|n})$,
and from (\ref{pnmres}) follows that $P(v,n)$ is also $O(\phi^{|b|n})$.
In that case in the right sides of (\ref{res2}) to (\ref{res7}) therefore $x^n P(v,n)\rightarrow 0$  when:
\begin{equation}
 |x| < \phi^{-|b|}
\end{equation}
When this condition is fulfilled the following infinite series result from (\ref{res2}) to (\ref{res7}),
with $\Delta$ and $P(v,w)$ defined in (\ref{res1}) and (\ref{pnmres}).
These are also the generating functions of the generalized Fibonacci numbers $G_{a+bn}$.
\begin{equation}\label{genfun1}
 \sum_{k=0}^{\infty} x^k G_{a+bk} = \frac{1}{\Delta} P(1,0)
\end{equation}
\begin{equation}\label{genfun2}
 \sum_{k=0}^{\infty} x^k k G_{a+bk} = \frac{1}{\Delta^2} x P(2,1)
\end{equation}
\begin{equation}
 \sum_{k=0}^{\infty} x^k k^2 G_{a+bk} = \frac{1}{\Delta^2} x P(2,1) + \frac{2}{\Delta^3} x^2 P(3,2)
\end{equation}
\begin{equation}
 \sum_{k=0}^{\infty} x^k k^3 G_{a+bk} = \frac{1}{\Delta^2} x P(2,1) + \frac{6}{\Delta^3} x^2 P(3,2) + \frac{6}{\Delta^4} x^3 P(4,3)
\end{equation}
\begin{equation}
 \sum_{k=0}^{\infty} x^k k^4 G_{a+bk} = \frac{1}{\Delta^2} x P(2,1) + \frac{14}{\Delta^3} x^2 P(3,2) + \frac{36}{\Delta^4} x^3 P(4,3)
   + \frac{24}{\Delta^5} x^4 P(5,4) 
\end{equation}
\begin{equation}
\begin{split}
 \sum_{k=0}^{\infty} x^k k^5 G_{a+bk} = & \frac{1}{\Delta^2} x P(2,1) + \frac{30}{\Delta^3} x^2 P(3,2) + \frac{150}{\Delta^4} x^3 P(4,3) + \frac{240}{\Delta^5} x^4 P(5,4) \\
 & + \frac{120}{\Delta^6} x^5 P(6,5) \\
\end{split}
\end{equation}

\section{Generalized Fibonacci Convolution Identities}

The generating functions are used to find the following generalized Fibonacci convolution identities.
\begin{equation}
 \sum_{k=0}^n G_k G_{n-k} = \frac{1}{5} \{ [ (3n+5)G_0 - (n+1)G_1 ] G_n - [ (n-1)G_0 - 2nG_1 ] G_{n+1} \}
\end{equation}
\begin{equation}
 \sum_{k=0}^n k G_k G_{n-k} = \frac{1}{10} n \{ [ (3n+5)G_0 - (n+1)G_1 ] G_n - [ (n-1)G_0 - 2nG_1 ] G_{n+1} \}
\end{equation}
\begin{equation}
\begin{split}
 \sum_{k=0}^n k^2 G_k G_{n-k} = & \frac{1}{150} \{ [ 3n ( 10n^2+25n+13 ) G_0 - (n+1) ( 10n^2+5n-12 ) G_1 ] G_n \\
  & - [ (n-1) (10n^2-5n-12) G_0 - 2n ( 10n^2-7 ) G_1 ] G_{n+1} \}
\end{split}
\end{equation}
\begin{equation}
 \sum_{k=0}^n (-1)^k G_k G_{n-k} = - \frac{1}{2} (1+(-1)^n) ( G_{-1} G_n - G_0 G_{n+1} )
\end{equation}
\begin{equation}
\begin{split}
 \sum_{k=0}^n (-1)^k k G_k G_{n-k} = & (-1)^{n+1} \frac{1}{2} n ( G_{-1} G_n - G_0 G_{n+1} ) \\
  & + \frac{1}{4} (1-(-1)^n) [ G_2 G_n - (G_{-1}+G_1) G_{n+1} ] \\
\end{split}
\end{equation}
\begin{equation}
\begin{split}
 \sum_{k=0}^n & (-1)^k k^2 G_k G_{n-k} \\
  = & (-1)^n \frac{1}{2} n \{ [ (n-1) G_0 - (n+1) G_1 ] G_n + [ (n-1) G_0 + 2 G_1 ] G_{n+1} \} \\
 & - (1+(-1)^n) ( G_1 G_n - G_0 G_{n+1} ) \\
\end{split}
\end{equation}

\section{List of Examples}

Examples with $x=1$:
\begin{equation}
 \sum_{k=0}^n G_{p+k} = G_{p+n+2} - G_{p+1}
\end{equation}
\begin{equation}
 \sum_{k=0}^n k G_{p+k} = n G_{p+n+2} -G_{p+n+3} + G_{p+3}
\end{equation}
\begin{equation}
 \sum_{k=0}^n k^2 G_{p+k} = n^2 G_{p+n+2} - 2n G_{p+n+3} + G_{p+n+6} - G_{p+6}
\end{equation}
\begin{equation}
\begin{split}
 \sum_{k=0}^n k^3 G_{p+k} = & n^3 G_{p+n+2} - 3n^2 G_{p+n+3} + (3n-1) G_{p+n+6} - 2G_{p+n+8} \\
 & + G_{p+6} + 2G_{p+8} \\
\end{split}
\end{equation}
\begin{equation}
\begin{split}
 \sum_{k=0}^n & k^4 G_{p+k} = n^4 G_{p+n+2} - 4n^3 G_{p+n+3} + 6n^2 G_{p+n+6} \\
 & + 4n ( G_{p+n+7} - 3G_{p+n+8} ) + 9 ( G_{p+n+9} - G_{p+9} ) + 2( G_{p+n+10} - G_{p+10} ) \\
\end{split}
\end{equation}
\begin{equation}
\begin{split}
 \sum_{k=0}^n & k^5 G_{p+k} = n^5 G_{p+n+2} - 5n^4 G_{p+n+3} + 10n^3 G_{p+n+6} + 10n^2 ( G_{p+n+7} - 3G_{p+n+8} ) \\
 & + 5n ( 9G_{p+n+9} + 2G_{p+n+10} ) - 34 ( G_{p+n+11} - G_{p+11} ) - 9 ( G_{p+n+12} - G_{p+12} ) \\
\end{split}
\end{equation}
\begin{equation}
 \sum_{k=0}^n G_{p+2k} = G_{p+2n+1} - G_{p-1}
\end{equation}
\begin{equation}
 \sum_{k=0}^n k G_{p+2k} = n G_{p+2n+1} -G_{p+2n} + G_p
\end{equation}
\begin{equation}
 \sum_{k=0}^n k^2 G_{p+2k} = n^2 G_{p+2n+1} - ( 2n-3 ) G_{p+2n} - 2 G_{p+2n-2} - G_{p+1} - G_{p-1}
\end{equation}
\begin{equation}
\begin{split}
 \sum_{k=0}^n k^3 G_{p+2k} = & n^3 G_{p+2n+1} - 3n^2 G_{p+2n} + 3n ( G_{p+2n+1} + G_{p+2n-1} ) \\
  & - 7 ( G_{p+2n} - G_p ) \\
\end{split}
\end{equation}
\begin{equation}
 \sum_{k=0}^n G_{p+3k} = \frac{1}{2} ( G_{p+3n+2} - G_{p-1} )
\end{equation}
\begin{equation}
 \sum_{k=0}^n k G_{p+3k} = \frac{1}{4} ( 2 n G_{p+3n+2} - G_{p+3n+1} + G_{p+1} )
\end{equation}
\begin{equation}
 \sum_{k=0}^n k^2 G_{p+3k} = \frac{1}{4} ( 2 n^2 G_{p+3n+2} - 2n G_{p+3n+1} + G_{p+3n+2} - G_{p+2} )
\end{equation}
\begin{equation}
 \sum_{k=0}^n G_{p+4k} = \frac{1}{5} ( G_{p+4n+4} - G_{p+4n} - G_p + G_{p-4} )
\end{equation}
\begin{equation}
 \sum_{k=0}^n k G_{p+4k} = \frac{1}{5} [ n ( G_{p+4n+4} - G_{p+4n} ) - G_{p+4n} + G_p ]
\end{equation}
\begin{equation}
 \sum_{k=0}^n G_{p+5k} = \frac{1}{11} ( G_{p+5n+5} + G_{p+5n} - G_p - G_{p-5} )
\end{equation}
\begin{equation}
 \sum_{k=0}^n G_{p+6k} = \frac{1}{4} ( G_{p+6n+3} - G_{p-3} )
\end{equation}
\begin{equation}
 \sum_{k=0}^n G_{p+7k} = \frac{1}{29} ( G_{p+7n+7} + G_{p+7n} - G_p - G_{p-7} )
\end{equation}
Examples with $x=2$:
\begin{equation}
 \sum_{k=0}^n 2^k G_{p+k} = \frac{1}{5} [ 2^{n+1} ( G_{p+n+2} + G_{p+n} ) - G_{p+1} - G_{p-1} ]
\end{equation}
\begin{equation}
 \sum_{k=0}^n 2^k k G_{p+k} = \frac{1}{5} \{ 2^{n+1} [ n ( G_{p+n+2} + G_{p+n} ) - G_{p+n+1} ] + 2 G_{p+1} \}
\end{equation}
\begin{equation}
\begin{split}
 \sum_{k=0}^n 2^k k^2 G_{p+k} = \frac{1}{25} \{ 2^{n+1} [ & 5n^2 ( G_{p+n+2} + G_{p+n} ) - 10n G_{p+n+1} \\
 & + G_{p+n+3} + G_{p+n+5} ] - 2 ( G_{p+3} + G_{p+5} ) \}
\end{split}
\end{equation}
\begin{equation}
 \sum_{k=0}^n 2^k G_{p+2k} = 2^{n+1} G_{p+2n-1} - G_{p-3}
\end{equation}
\begin{equation}
 \sum_{k=0}^n 2^k k G_{p+2k} = 2^{n+1} ( n G_{p+2n-1} - G_{p+2n-4} ) + 2 G_{p-4}
\end{equation}
\begin{equation}
\begin{split}
 \sum_{k=0}^n 2^k k^2 G_{p+2k} = & 2^{n+1} ( n^2 G_{p+2n-1} - 2n G_{p+2n-4} - G_{p+2n-6} + 3 G_{p+2n-5} ) \\
 & + 2 ( G_{p-6} - 3 G_{p-5} ) \\
\end{split}
\end{equation}
\begin{equation}
 \sum_{k=0}^n 2^k G_{p+3k} = \frac{1}{11} [ 2^{n+1} ( G_{p+3n+3} + 2 G_{p+3n} ) - G_p - 2 G_{p-3} ]
\end{equation}
\begin{equation}
 \sum_{k=0}^n 2^k G_{p+4k} = \frac{1}{3} ( 2^{n+1} G_{p+4n+1} - G_{p-3} )
\end{equation}
Examples with $x=3$:
\begin{equation}
 \sum_{k=0}^n 3^k G_{p+k} = \frac{1}{11} [ 3^{n+1} ( 3 G_{p+n} + G_{p+n+1} ) - 3 G_{p-1} - G_p ]
\end{equation}
\begin{equation}
\begin{split}
 \sum_{k=0}^n 3^k k G_{p+k} = & \frac{1}{121} \{ 3^{n+1} [ 11n(3G_{p+n}+G_{p+n+1}) + 3G_{p+n}-10G_{p+n+1} ] \\
  & - 3(3G_p-10G_{p+1}) \} \\
\end{split}
\end{equation}
\begin{equation}
 \sum_{k=0}^n 3^k G_{p+2k} = 3^{n+1} G_{p+2n-2} - G_{p-4}
\end{equation}
\begin{equation}
 \sum_{k=0}^n 3^k G_{p+3k} = \frac{1}{10} [ 3^{n+1} ( 2 G_{p+3n+2} - G_{p+3n+1} ) - 2 G_{p-1} + G_{p-2} ]
\end{equation}
Examples with $x=1/2$:
\begin{equation}
 \sum_{k=0}^n 2^{-k}G_{p+k} = -2^{-n}G_{p+n+3} + 2G_{p+2} 
\end{equation}
\begin{equation}
 \sum_{k=0}^n 2^{-k} k G_{p+k} = -2^{-n} ( n G_{p+n+3} + 2 G_{p+n+5} ) + 2G_{p+5}
\end{equation}
\begin{equation}
\begin{split}
 \sum_{k=0}^n 2^{-k} k^2 G_{p+k} = & - 2^{-n} [ n^2 G_{p+n+3} + 4n G_{p+n+5} + 2 ( G_{p+n+7} + G_{p+n+9} ) ] \\
       & + 2(G_{p+7}+G_{p+9}) \\
\end{split}
\end{equation}
\begin{equation}
\begin{split}
 \sum_{k=0}^n 2^{-k} k^3 G_{p+k} = & - 2^{-n} [ n^3 G_{p+n+3} + 6n^2 G_{p+n+5} + 6n ( 2G_{p+n+9} - G_{p+n+8} ) \\
       & + 2 ( 3G_{p+n+12} + G_{p+n+13} ) ] + 2( 3G_{p+12} + G_{p+13} ) \\
\end{split}
\end{equation}
\begin{equation}
 \sum_{k=0}^n 2^{-k}G_{p+2k} = 2^{-n}G_{p+2n+3} - 2G_{p+1}
\end{equation}
\begin{equation}
 \sum_{k=0}^n 2^{-k} k G_{p+2k} = 2^{-n} ( n G_{p+2n+3} - 2G_{p+2n+4} ) + 2 G_{p+4}
\end{equation}
\begin{equation}
\begin{split}
 \sum_{k=0}^n 2^{-k} k^2 G_{p+2k} = & 2^{-n} [ n^2 G_{p+2n+3} - 4n G_{p+2n+4} + 2 ( 3 G_{p+2n+5} + G_{p+2n+6} ) ] \\
  & - 2 ( 3 G_{p+5} + G_{p+6} ) \\
\end{split}
\end{equation}
\begin{equation}
 \sum_{k=0}^n 2^{-k} G_{p+3k} = \frac{1}{5} [ 2^{-n} ( G_{p+3n+2} + G_{p+3n+4} ) - 2 ( G_{p+1} + G_{p-1} ) ]
\end{equation}
\begin{equation}
 \sum_{k=0}^n 2^{-k} G_{p+4k} = \frac{1}{3} ( 2^{-n} G_{p+4n+3} - 2 G_{p-1} )
\end{equation}
Examples with $x=1/3$:
\begin{equation}
 \sum_{k=0}^n 3^{-k}G_{p+k} = - \frac{1}{5} [ 3^{-n} ( G_{p+n+3} + G_{p+n+1} ) - 3 ( G_{p+2} + G_p ) ] 
\end{equation}
\begin{equation}
 \sum_{k=0}^n 3^{-k} k G_{p+k} = - \frac{1}{5} \{ 3^{-n} [ n ( G_{p+n+3} + G_{p+n+1} ) + 3 G_{p+n+3} ] - 3 G_{p+3} \} 
\end{equation}
\begin{equation}
\begin{split}
 \sum_{k=0}^n 3^{-k} k^2 G_{p+k} = & - \frac{1}{25} \{ 3^{-n} [ 5 n^2 ( G_{p+n+3} + G_{p+n+1} ) + 30 n G_{p+n+3} \\
 & + 3 ( 7 G_{p+n+5} - G_{p+n+4} ) ] - 3 ( 7 G_{p+5} - G_{p+4} ) \} \\
\end{split}
\end{equation}
\begin{equation}
 \sum_{k=0}^n 3^{-k} G_{p+2k} = - 3^{-n} G_{p+2n+4} + 3 G_{p+2} 
\end{equation}
\begin{equation}
 \sum_{k=0}^n 3^{-k} k G_{p+2k} = - 3^{-n} ( n G_{p+2n+4} + 3 G_{p+2n+6} ) + 3 G_{p+6} 
\end{equation}
\begin{equation}
\begin{split}
 \sum_{k=0}^n 3^{-k} k^2 G_{p+2k} = & - 3^{-n} [ n^2 G_{p+2n+4} + 6n G_{p+2n+6} \\
 & + 3 ( 4 G_{p+2n+8} + G_{p+2n+9} ) ] + 3 ( 4 G_{p+8} + G_{p+9} ) \\
\end{split}
\end{equation}
\begin{equation}
 \sum_{k=0}^n 3^{-k} G_{p+3k} = \frac{1}{2} ( 3^{-n} G_{p+3n+4} - 3 G_{p+1} )
\end{equation}
\begin{equation}
 \sum_{k=0}^n 3^{-k} k G_{p+3k} = \frac{1}{4} [ 3^{-n} ( 2n G_{p+3n+4} - 3 G_{p+3n+5} ) + 3 G_{p+5} ]
\end{equation}
\begin{equation}
\begin{split}
 \sum_{k=0}^n 3^{-k} k^2 G_{p+3k} = & \frac{1}{4} \{ 3^{-n} [ 2n^2 G_{p+3n+4} - 6n G_{p+3n+5} \\
 & - 3 ( G_{p+3n+7}-2G_{p+3n+8} ) ] + 3 ( G_{p+7} - 2G_{p+8} ) \} \\
\end{split}
\end{equation}
Examples with $x=-1$:
\begin{equation}
 \sum_{k=0}^n (-1)^k G_{p+k} = (-1)^n G_{p+n-1} + G_{p-2}
\end{equation}
\begin{equation}
 \sum_{k=0}^n (-1)^k k G_{p+k} = (-1)^n ( n G_{p+n-1} + G_{p+n-3} ) - G_{p-3}
\end{equation}
\begin{equation}
 \sum_{k=0}^n (-1)^k k^2 G_{p+k} = (-1)^n ( n^2 G_{p+n-1} + 2n G_{p+n-3} - G_{p+n-6} ) + G_{p-6}
\end{equation}
\begin{equation}
\begin{split}
 \sum_{k=0}^n (-1)^k k^3 G_{p+k} = & (-1)^n [ n^3 G_{p+n-1} + 3n^2 G_{p+n-3} \\
  & - 3(n+1) G_{p+n-6} + 2G_{p+n-7} ] + G_{p-6} + 2G_{p-8} \\
\end{split}
\end{equation}
\begin{equation}
\begin{split}
 \sum_{k=0}^n & (-1)^k k^4 G_{p+k} = (-1)^n [ n^4 G_{p+n-1} + 4n^3 G_{p+n-3} - 6n^2 G_{p+n-6} \\
  & - 4n ( 3 G_{p+n-8} + G_{p+n-7} ) - 2 G_{p+n-10} + 9 G_{p+n-9} ] + 2 G_{p-10} - 9 G_{p-9} \\
\end{split}
\end{equation}
\begin{equation}
\begin{split}
 \sum_{k=0}^n & (-1)^k k^5 G_{p+k} = (-1)^n [ n^5 G_{p+n-1} + 5n^4 G_{p+n-3} - 10n^3 G_{p+n-6} \\
  & - 10n^2 ( 3 G_{p+n-8} + G_{p+n-7} ) -5n ( 2 G_{p+n-10} - 9 G_{p+n-9} ) \\
  & - 9 G_{p+n-12} + 34 G_{p+n-11} ] + 9 G_{p-12} - 34 G_{p-11} \\
\end{split}
\end{equation}
\begin{equation}
 \sum_{k=0}^n (-1)^k G_{p+2k} = \frac{1}{5} [ (-1)^n ( G_{p+2n+2} + G_{p+2n} ) + G_p + G_{p-2} ]
\end{equation}
\begin{equation}
 \sum_{k=0}^n (-1)^k k G_{p+2k} = \frac{1}{5} \{ (-1)^n [ n ( G_{p+2n+2} + G_{p+2n} ) + G_{p+2n} ] - G_p \}
\end{equation}
\begin{equation}
\begin{split}
 \sum_{k=0}^n (-1)^k k^2 G_{p+2k} = \frac{1}{25} \{ (-1)^n [ & 5 n^2 ( G_{p+2n+2} + G_{p+2n} ) + ( 10 n + 1 ) G_{p+2n} \\
  & - 2 G_{p+2n+1} ] + G_{p+1} + G_{p-1} \} \\
\end{split}
\end{equation}
\begin{equation}
\begin{split}
 \sum_{k=0}^n (-1)^k k^3 G_{p+2k} = & \frac{1}{25} \{ (-1)^n [ 5 n^3 ( G_{p+2n+2} + G_{p+2n} ) + 15 n^2 G_{p+2n} \\
  & - 3n ( G_{p+2n+1} + G_{p+2n-1} ) - G_{p+2n} ] + G_p \} \\
\end{split}
\end{equation}
\begin{equation}
 \sum_{k=0}^n (-1)^k G_{p+3k} = \frac{1}{2} [ (-1)^n G_{p+3n+1} + G_{p-2} ]
\end{equation}
\begin{equation}
 \sum_{k=0}^n (-1)^k k G_{p+3k} = \frac{1}{4} [ (-1)^n ( 2 n G_{p+3n+1} + G_{p+3n-1} ) - G_{p-1} ]
\end{equation}
\begin{equation}
 \sum_{k=0}^n (-1)^k k^2 G_{p+3k} = \frac{1}{4} [ (-1)^n ( 2 n^2 G_{p+3n+1} + 2n G_{p+3n-1} - G_{p+3n-2} ) + G_{p-2} ]
\end{equation}
\begin{equation}
 \sum_{k=0}^n (-1)^k G_{p+4k} = \frac{1}{3} [ (-1)^n G_{p+4n+2} + G_{p-2} ]
\end{equation}
\begin{equation}
 \sum_{k=0}^n (-1)^k k G_{p+4k} = \frac{1}{9} [ (-1)^n ( 3 n G_{p+4n+2} + G_{p+4n} ) - G_p ]
\end{equation}
\begin{equation}
 \sum_{k=0}^n (-1)^k G_{p+5k} = \frac{1}{11} [ (-1)^n ( G_{p+5n+5} - G_{p+5n} ) + G_p - G_{p-5} ]
\end{equation}
\begin{equation}
 \sum_{k=0}^n (-1)^k G_{p+6k} = \frac{1}{10} [ (-1)^n ( G_{p+6n+4} + G_{p+6n+2} ) + G_{p-2} + G_{p-4} ]
\end{equation}
\begin{equation}
 \sum_{k=0}^n (-1)^k G_{p+7k} = \frac{1}{29} [ (-1)^n ( G_{p+7n+7} - G_{p+7n} ) + G_p - G_{p-7} ]
\end{equation}
Examples with $x=-2$:
\begin{equation}
 \sum_{k=0}^n (-1)^k 2^k G_{p+k} = (-1)^n 2^{n+1} G_{p+n-2} + G_{p-3}
\end{equation}
\begin{equation}
 \sum_{k=0}^n (-1)^k 2^k k G_{p+k} = (-1)^n 2^{n+1} ( n G_{p+n-2} + G_{p+n-5} ) - 2 G_{p-5}
\end{equation}
\begin{equation}
\begin{split}
 \sum_{k=0}^n (-1)^k 2^k k^2 G_{p+k} = & (-1)^n 2^{n+1} ( n^2 G_{p+n-2} + 2n G_{p+n-5} + G_{p+n-8} - 2 G_{p+n-7} ) \\
 & - 2 ( G_{p-8} - 2 G_{p-7} ) \\
\end{split}
\end{equation}
\begin{equation}
 \sum_{k=0}^n (-1)^k 2^k G_{p+2k} = \frac{1}{11} [ (-1)^n 2^{n+1} ( G_{p+2n+2} + 2 G_{p+2n} ) + G_p + 2 G_{p-2} ]
\end{equation}
\begin{equation}
\begin{split}
 \sum_{k=0}^n (-1)^k 2^k k G_{p+2k} = & \frac{1}{121} \{ (-1)^n 2^{n+1} [ 11n ( G_{p+2n+2} + 2 G_{p+2n} ) \\
  & - 3 G_{p+2n-1} + 10 G_{p+2n} ] + 2 ( 3 G_{p-1} - 10 G_p ) \} \\
\end{split}
\end{equation}
\begin{equation}
 \sum_{k=0}^n (-1)^k 2^k G_{p+3k} = \frac{1}{5} [ (-1)^n 2^{n+1} ( G_{p+3n+1} + G_{p+3n-1} ) + G_{p-2} + G_{p-4} ]
\end{equation}
\begin{equation}
 \sum_{k=0}^n (-1)^k 2^k G_{p+4k} = \frac{1}{19} [ (-1)^n 2^{n+1} ( G_{p+4n+4} + 2 G_{p+4n} ) + G_p + 2 G_{p-4} ]
\end{equation}
Examples with $x=-3$:
\begin{equation}
 \sum_{k=0}^n (-1)^k 3^k G_{p+k} = \frac{1}{5} [ (-1)^n 3^{n+1} ( G_{p+n} + G_{p+n-2} ) + G_{p-1} + G_{p-3} ]
\end{equation}
\begin{equation}
 \sum_{k=0}^n (-1)^k 3^k k G_{p+k} = \frac{1}{5} \{ (-1)^n 3^{n+1} [ n ( G_{p+n} + G_{p+n-2} ) + G_{p+n-3} ] - 3 G_{p-3} \}
\end{equation}
\begin{equation}
 \sum_{k=0}^n (-1)^k 3^k G_{p+2k} = \frac{1}{19} [ (-1)^n 3^{n+1} ( G_{p+2n+2} + 3 G_{p+2n} ) + G_p + 3 G_{p-2} ]
\end{equation}
\begin{equation}
 \sum_{k=0}^n (-1)^k 3^k G_{p+3k} = \frac{1}{2} [ (-1)^n 3^{n+1} G_{p+3n-1} + G_{p-4} ]
\end{equation}
Examples with $x=-1/2$:
\begin{equation}
 \sum_{k=0}^n (-1)^k 2^{-k} G_{p+k} = \frac{1}{5} [ (-1)^n 2^{-n} ( G_{p+n+1} + G_{p+n-1} ) + 2 ( G_p + G_{p-2} ) ]
\end{equation}
\begin{equation}
 \sum_{k=0}^n (-1)^k 2^{-k} k G_{p+k} = \frac{1}{5} \{ (-1)^n 2^{-n} [ n ( G_{p+n+1} + G_{p+n-1} ) + 2 G_{p+n-1} ] - 2 G_{p-1} \}
\end{equation}
\begin{equation}
\begin{split}
 \sum_{k=0}^n (-1)^k 2^{-k} k^2 G_{p+k} = & \frac{1}{25} \{ (-1)^n 2^{-n} [ 5n^2 ( G_{p+n+1} + G_{p+n-1} ) + 20n G_{p+n-1} \\ 
  & + 2 ( G_{p+n-5} + G_{p+n-3} ) ] - 2 ( G_{p-5} + G_{p-3} ) \} \\
\end{split}
\end{equation}
\begin{equation}
 \sum_{k=0}^n (-1)^k 2^{-k} G_{p+2k} = \frac{1}{11} [ (-1)^n 2^{-n} ( G_{p+2n} + 2 G_{p+2n+2} ) + 2 ( G_{p-2} + 2 G_p ) ]
\end{equation}
\begin{equation}
\begin{split}
 \sum_{k=0}^n (-1)^k 2^{-k} k G_{p+2k} = & \frac{1}{121} \{ (-1)^n 2^{-n} [ 11n ( G_{p+2n} + 2 G_{p+2n+2} ) \\
  & + 2 ( 10 G_{p+2n} + 3 G_{p+2n+1} ) ] - 2 ( 10 G_p + 3 G_{p+1} ) \}
\end{split}
\end{equation}
\begin{equation}
 \sum_{k=0}^n (-1)^k 2^{-k} G_{p+3k} = \frac{1}{11} [ (-1)^n 2^{-n} ( 2 G_{p+3n+3} - G_{p+3n} ) + 2 ( 2 G_p - G_{p-3} ) ]
\end{equation}
\begin{equation}
 \sum_{k=0}^n (-1)^k 2^{-k} G_{p+4k} = \frac{1}{19} [ (-1)^n 2^{-n} ( 2 G_{p+4n+4} + G_{p+4n} ) + 2 ( 2 G_p + G_{p-4} ) ]
\end{equation}
Examples with $x=-1/3$:
\begin{equation}
 \sum_{k=0}^n (-1)^k 3^{-k} G_{p+k} = \frac{1}{11} [ (-1)^n 3^{-n} ( 3 G_{p+n+1} - G_{p+n} ) + 3 ( 3 G_p - G_{p-1} ) ]
\end{equation}
\begin{equation}
\begin{split}
 \sum_{k=0}^n (-1)^k 3^{-k} k G_{p+k} = & \frac{1}{121} \{ (-1)^n 3^{-n} [ 11 n ( 3 G_{p+n+1} - G_{p+n} ) \\
 & + 3 ( 10 G_{p+n-1} + 3 G_{p+n} ) ] - 3 ( 10 G_{p-1} + 3 G_p ) \} \\
\end{split}
\end{equation}
\begin{equation}
 \sum_{k=0}^n (-1)^k 3^{-k} G_{p+2k} = \frac{1}{19} [ (-1)^n 3^{-n} ( 3 G_{p+2n+2} + G_{p+2n} ) + 3 ( 3 G_p + G_{p-2} ) ]
\end{equation}
\begin{equation}
 \sum_{k=0}^n (-1)^k 3^{-k} G_{p+3k} = \frac{1}{20} [ (-1)^n 3^{-n} ( 3 G_{p+3n+3} - G_{p+3n} ) + 3 ( 3 G_p - G_{p-3} ) ]
\end{equation}

\section{List of Examples with Binomial Coefficients}

Examples with $x=1$:
\begin{equation}
 \sum_{k=0}^n \binom{n}{k} G_{p+k} = G_{p+2n}
\end{equation}
\begin{equation}
 \sum_{k=0}^n \binom{n}{k} k G_{p+k} = n G_{p+2n-1}
\end{equation}
\begin{equation}
 \sum_{k=0}^n \binom{n}{k} k^2 G_{p+k} = n ( n G_{p+2n-2} + G_{p+2n-3} )
\end{equation}
\begin{equation}
 \sum_{k=0}^n \binom{n}{k} k^3 G_{p+k} = n ( n^2 G_{p+2n-3} + 3n G_{p+2n-4} - G_{p+2n-6} )
\end{equation}
\begin{equation}
\begin{split}
 \sum_{k=0}^n \binom{n}{k} k^4 G_{p+k} = & n [ n^3 G_{p+2n-4} + 6n^2 G_{p+2n-5} - n ( G_{p+2n-9} -2 G_{p+2n-8} ) \\
 & - 3 G_{p+2n-8} - G_{p+2n-7} ] \\
\end{split}
\end{equation}
\begin{equation}
\begin{split}
 \sum_{k=0}^n \binom{n}{k} k^5 G_{p+k} = & n [ n^4 G_{p+2n-5} + 10n^3 G_{p+2n-6} + 5n^2 ( 3 G_{p+2n-9} + G_{p+2n-8} ) \\
 & - 5n ( 5 G_{p+2n-9} + G_{p+2n-8} ) - 2 G_{p+2n-10} + 9 G_{p+2n-9} ] \\
\end{split}
\end{equation}
\begin{equation}
 \sum_{k=0}^n \binom{n}{k} G_{p+2k} = 5^{\lfloor n/2\rfloor} ( G_{p+n+1} - (-1)^n G_{p+n-1} )
\end{equation}
\begin{equation}
 \sum_{k=0}^n \binom{n}{k} k G_{p+2k} = 5^{\lfloor (n-1)/2\rfloor} n ( G_{p+n+2} + (-1)^n G_{p+n} )
\end{equation}
\begin{equation}
\begin{split}
 \sum_{k=0}^n \binom{n}{k} k^2 G_{p+2k} = & n [ 5^{\lfloor (n-1)/2\rfloor} ( G_{p+n+2} + (-1)^n G_{p+n} ) \\
   & + 5^{\lfloor n/2\rfloor-1} (n-1) ( G_{p+n+3} - (-1)^n G_{p+n+1} ) ] \\
\end{split}
\end{equation}
\begin{equation}
\begin{split}
 \sum_{k=0}^n \binom{n}{k} k^3 G_{p+2k} = & n \{ 5^{\lfloor (n-1)/2\rfloor-1} [ 5 ( G_{p+n+2} + (-1)^n G_{p+n} ) \\
   & + (n-1)(n-2) ( G_{p+n+4} + (-1)^n G_{p+n+2} ) ] \\
   & + 5^{\lfloor n/2\rfloor-1} 3(n-1) ( G_{p+n+3} - (-1)^n G_{p+n+1} ) \} \\
\end{split}
\end{equation}
\begin{equation}
 \sum_{k=0}^n \binom{n}{k} G_{p+3k} = 2^n G_{p+2n}
\end{equation}
\begin{equation}
 \sum_{k=0}^n \binom{n}{k} k G_{p+3k} = 2^{n-1} n G_{p+2n+1}
\end{equation}
\begin{equation}
 \sum_{k=0}^n \binom{n}{k} k^2 G_{p+3k} = 2^{n-2} n ( n G_{p+2n+2} + G_{p+2n-1} )
\end{equation}
\begin{equation}
 \sum_{k=0}^n \binom{n}{k} k^3 G_{p+3k} = 2^{n-3} n ( n^2 G_{p+2n+3} + 3n G_{p+2n} - 2 G_{p+2n-2} )
\end{equation}
\begin{equation}
 \sum_{k=0}^n \binom{n}{k} G_{p+4k} = 3^n G_{p+2n}
\end{equation}
\begin{equation}
 \sum_{k=0}^n \binom{n}{k} k G_{p+4k} = 3^{n-1} n G_{p+2n+2}
\end{equation}
\begin{equation}
 \sum_{k=0}^n \binom{n}{k} k^2 G_{p+4k} = 3^{n-2} n ( n G_{p+2n+4} + G_{p+2n} )
\end{equation}
\begin{equation}
 \sum_{k=0}^n \binom{n}{k} k^3 G_{p+4k} = 3^{n-3} n ( n^2 G_{p+2n+6} + 3n G_{p+2n+2} - G_{p+2n+1} - G_{p+2n-1} )
\end{equation}
Examples with $x=2$:
\begin{equation}\label{benjamin1}
 \sum_{k=0}^n \binom{n}{k} 2^k G_{p+k} = G_{p+3n}
\end{equation}
\begin{equation}
 \sum_{k=0}^n \binom{n}{k} 2^k k G_{p+k} = 2n G_{p+3n-2}
\end{equation}
\begin{equation}
 \sum_{k=0}^n \binom{n}{k} 2^k k^2 G_{p+k} = 2n ( 2n G_{p+3n-4} + G_{p+3n-5} )
\end{equation}
\begin{equation}
 \sum_{k=0}^n \binom{n}{k} 2^k k^3 G_{p+k} = 2n [ 4n^2 G_{p+3n-6} + ( 6n-1 ) G_{p+3n-7} - G_{p+3n-9} ]
\end{equation}
Examples with $x=3$:
\begin{equation}
 \sum_{k=0}^n \binom{n}{k} 3^k G_{p+k} = 5^{\lfloor n/2\rfloor} ( G_{p+2n+1} - (-1)^n G_{p+2n-1} )
\end{equation}
\begin{equation}
 \sum_{k=0}^n \binom{n}{k} 3^k k G_{p+k} = 5^{\lfloor (n-1)/2\rfloor} 3 n ( G_{p+2n} + (-1)^n G_{p+2n-2} )
\end{equation}
\begin{equation}
\begin{split}
 \sum_{k=0}^n \binom{n}{k} 3^k k^2 G_{p+k} = & 3n [ 5^{\lfloor (n-1)/2\rfloor} ( G_{p+2n} + (-1)^n G_{p+2n-2} ) \\
 & + 5^{\lfloor n/2\rfloor-1} 3(n-1) (G_{p+2n-1}-(-1)^nG_{p+2n-3}) ] \\
\end{split}
\end{equation}
\begin{equation}
\begin{split}
 \sum_{k=0}^n \binom{n}{k} 3^k k^3 G_{p+k} = & 3n \{ 5^{\lfloor (n-1)/2\rfloor-1} [ 5 ( G_{p+2n} + (-1)^n G_{p+2n-2} ) \\
 & + 9(n-1)(n-2) ( G_{p+2n-2} + (-1)^n G_{p+2n-4} ) ] \\
 & + 5^{\lfloor n/2\rfloor-1} 9(n-1) (G_{p+2n-1}-(-1)^nG_{p+2n-3}) \} \\
\end{split}
\end{equation}
Examples with $x=1/2$:
\begin{equation}
 \sum_{k=0}^n \binom{n}{k} 2^{-k} G_{p+k} = 5^{\lfloor n/2\rfloor} 2^{-n} ( G_{p+n+1} - (-1)^n G_{p+n-1} )
\end{equation}
\begin{equation}
 \sum_{k=0}^n \binom{n}{k} 2^{-k} k G_{p+k} = 5^{\lfloor (n-1)/2\rfloor} 2^{-n} n ( G_{p+n+1} + (-1)^n G_{p+n-1} )
\end{equation}
\begin{equation}
\begin{split}
 \sum_{k=0}^n \binom{n}{k} 2^{-k} k^2 G_{p+k} = & 2^{-n} n [ 5^{\lfloor (n-1)/2\rfloor} ( G_{p+n+1} + (-1)^n G_{p+n-1} ) \\
 & + 5^{\lfloor n/2\rfloor-1} (n-1) ( G_{p+n+1} - (-1)^n G_{p+n-1}) ] \\
\end{split}
\end{equation}
\begin{equation}
\begin{split}
 \sum_{k=0}^n & \binom{n}{k} 2^{-k} k^3 G_{p+k} \\
 = & 2^{-n} n \{ 5^{\lfloor (n-1)/2\rfloor-1} [5+(n-1)(n-2)] ( G_{p+n+1} + (-1)^n G_{p+n-1} ) \\
 & + 5^{\lfloor n/2\rfloor-1} 3 (n-1) ( G_{p+n+1} - (-1)^n G_{p+n-1}) \} \\
\end{split}
\end{equation}
Examples with $x=-1$:
\begin{equation}
 \sum_{k=0}^n \binom{n}{k} (-1)^k G_{p+k} = (-1)^n G_{p-n}
\end{equation}
\begin{equation}
 \sum_{k=0}^n \binom{n}{k} (-1)^k k G_{p+k} = (-1)^n n G_{p-n+2}
\end{equation}
\begin{equation}
 \sum_{k=0}^n \binom{n}{k} (-1)^k k^2 G_{p+k} = (-1)^n n ( n G_{p-n+4} - G_{p-n+3} )
\end{equation}
\begin{equation}
 \sum_{k=0}^n \binom{n}{k} (-1)^k k^3 G_{p+k} = (-1)^n n [ (n^2 + 1) G_{p-n+6} - 3n G_{p-n+5} ]
\end{equation}
\begin{equation}
\begin{split}
 \sum_{k=0}^n \binom{n}{k} (-1)^k k^4 G_{p+k} = (-1)^n n & [ n^3 G_{p-n+8} - 6n^2 G_{p-n+7} \\
  & + (7n-3) G_{p-n+8} - (3n-1) G_{p-n+7} ] \\
\end{split}
\end{equation}
\begin{equation}
\begin{split}
 & \sum_{k=0}^n \binom{n}{k} (-1)^k k^5 G_{p+k} = (-1)^n n [ n^4 G_{p-n+10} - 10n^3 G_{p-n+9} \\
  & + 5n^2 ( 5 G_{p-n+8} + 2 G_{p-n+9} ) - (5n-9) G_{p-n+9} - (15n-2) G_{p-n+10} ] \\
\end{split}
\end{equation}
\begin{equation}
 \sum_{k=0}^n \binom{n}{k} (-1)^k G_{p+2k} = (-1)^n G_{p+n}
\end{equation}
\begin{equation}
 \sum_{k=0}^n \binom{n}{k} (-1)^k k G_{p+2k} = (-1)^n n G_{p+n+1}
\end{equation}
\begin{equation}
 \sum_{k=0}^n \binom{n}{k} (-1)^k k^2 G_{p+2k} = (-1)^n n ( n G_{p+n+2} - G_{p+n} )
\end{equation}
\begin{equation}
 \sum_{k=0}^n \binom{n}{k} (-1)^k k^3 G_{p+2k} = (-1)^n n [ n^2 G_{p+n+3} - (3n-2) G_{p+n+1} - G_{p+n} ]
\end{equation}
\begin{equation}
 \sum_{k=0}^n \binom{n}{k} (-1)^k G_{p+3k} = (-1)^n 2^n G_{p+n}
\end{equation}
\begin{equation}
 \sum_{k=0}^n \binom{n}{k} (-1)^k k G_{p+3k} = (-1)^n 2^{n-1} n G_{p+n+2}
\end{equation}
\begin{equation}
 \sum_{k=0}^n \binom{n}{k} (-1)^k k^2 G_{p+3k} = (-1)^n 2^{n-2} n ( n G_{p+n+4} - G_{p+n+1} )
\end{equation}
\begin{equation}
 \sum_{k=0}^n \binom{n}{k} (-1)^k k^3 G_{p+3k} = (-1)^n 2^{n-3} n ( n^2 G_{p+n+6} - 3n G_{p+n+3} + 2 G_{p+n+2} )
\end{equation}
Examples with $x=-2$:
\begin{equation}
 \sum_{k=0}^n \binom{n}{k} (-1)^k 2^k G_{p+k} = 5^{\lfloor n/2\rfloor} ( (-1)^n G_{p+1} - G_{p-1} )
\end{equation}
\begin{equation}
 \sum_{k=0}^n \binom{n}{k} (-1)^k 2^k k G_{p+k} = 5^{\lfloor (n-1)/2\rfloor} 2n ( (-1)^n G_{p+2} + G_p )
\end{equation}
\begin{equation}
\begin{split}
 \sum_{k=0}^n \binom{n}{k} (-1)^k 2^k k^2 G_{p+k} = & 2n [ 5^{\lfloor (n-1)/2\rfloor} ( (-1)^n G_{p+2} + G_p ) \\
  & + 5^{\lfloor n/2\rfloor-1} 2(n-1) ( (-1)^n G_{p+3} - G_{p+1} ) ] \\
\end{split}
\end{equation}
\begin{equation}
\begin{split}
 \sum_{k=0}^n \binom{n}{k} (-1)^k 2^k k^3 G_{p+k} = & 2n \{ 5^{\lfloor (n-1)/2\rfloor-1} [ 5 ( (-1)^n G_{p+2} + G_p ) \\
  & + 4(n-1)(n-2) ( (-1)^n G_{p+4} + G_{p+2} ) ] \\
  & + 5^{\lfloor n/2\rfloor-1} 6(n-1) ( (-1)^n G_{p+3} - G_{p+1} ) \} \\
\end{split}
\end{equation}
\begin{equation}
 \sum_{k=0}^n \binom{n}{k} (-1)^k 2^k G_{p+2k} = (-1)^n G_{p+3n}
\end{equation}
\begin{equation}
 \sum_{k=0}^n \binom{n}{k} (-1)^k 2^k k G_{p+2k} = (-1)^n 2 n G_{p+3n-1}
\end{equation}
\begin{equation}
 \sum_{k=0}^n \binom{n}{k} (-1)^k 2^k k^2 G_{p+2k} = (-1)^n 2 n ( 2n G_{p+3n-2} - G_{p+3n-4} )
\end{equation}
\begin{equation}
 \sum_{k=0}^n \binom{n}{k} (-1)^k 2^k k^3 G_{p+2k} = (-1)^n 2 n [ 4n^2 G_{p+3n-3} - 3(2n-1) G_{p+3n-5} - G_{p+3n-6} ]
\end{equation}
Examples with $x=-3$:
\begin{equation}
 \sum_{k=0}^n \binom{n}{k} (-1)^k 3^k G_{p+2k} = (-1)^n G_{p+4n}
\end{equation}
\begin{equation}
 \sum_{k=0}^n \binom{n}{k} (-1)^k 3^k k G_{p+2k} = (-1)^n 3 n G_{p+4n-2}
\end{equation}
\begin{equation}
 \sum_{k=0}^n \binom{n}{k} (-1)^k 3^k k^2 G_{p+2k} = (-1)^n 3 n ( 3 n G_{p+4n-4} - G_{p+4n-6} )
\end{equation}
\begin{equation}
\begin{split}
 \sum_{k=0}^n & \binom{n}{k} (-1)^k 3^k k^3 G_{p+2k} \\
 = & (-1)^n 3 n [ 9 n^2 G_{p+4n-6} - (9n-4) G_{p+4n-8} - G_{p+4n-9} ] \\
\end{split}
\end{equation}
Examples with $x=-1/2$:
\begin{equation}
 \sum_{k=0}^n \binom{n}{k} (-1)^k 2^{-k} G_{p+k} = 2^{-n} G_{p-2n}
\end{equation}
\begin{equation}
 \sum_{k=0}^n \binom{n}{k} (-1)^k 2^{-k} k G_{p+k} = - 2^{-n} n G_{p-2n+3}
\end{equation}
\begin{equation}
 \sum_{k=0}^n \binom{n}{k} (-1)^k 2^{-k} k^2 G_{p+k} = 2^{-n} n ( n G_{p-2n+6} - 2 G_{p-2n+5} )
\end{equation}
\begin{equation}
 \sum_{k=0}^n \binom{n}{k} (-1)^k 2^{-k} k^3 G_{p+k} = - 2^{-n} n [ (n^2+4) G_{p-2n+9} - 2(3n+1) G_{p-2n+8} ]
\end{equation}
\begin{equation}
 \sum_{k=0}^n \binom{n}{k} (-1)^k 2^{-k} G_{p+2k} = (-1)^n 2^{-n} G_{p-n}
\end{equation}
\begin{equation}
 \sum_{k=0}^n \binom{n}{k} (-1)^k 2^{-k} k G_{p+2k} = (-1)^n 2^{-n} n G_{p-n+3}
\end{equation}
\begin{equation}
 \sum_{k=0}^n \binom{n}{k} (-1)^k 2^{-k} k^2 G_{p+2k} = (-1)^n 2^{-n} n ( n G_{p-n+6} - 2 G_{p-n+4} )
\end{equation}
\begin{equation}
\begin{split}
 \sum_{k=0}^n & \binom{n}{k} (-1)^k 2^{-k} k^3 G_{p+2k} \\
  = & (-1)^n 2^{-n} n [ n^2 G_{p-n+9} - 6n G_{p-n+7} + 2 ( 3 G_{p-n+5} + G_{p-n+6} ) ] \\
\end{split}
\end{equation}
Examples with $x=-1/3$:
\begin{equation}
 \sum_{k=0}^n \binom{n}{k} (-1)^k 3^{-k} G_{p+k} = 5^{\lfloor n/2\rfloor} 3^{-n} ( G_{p-n+1} - (-1)^n G_{p-n-1} )
\end{equation}
\begin{equation}
 \sum_{k=0}^n \binom{n}{k} (-1)^k 3^{-k} k G_{p+k} = - 5^{\lfloor (n-1)/2\rfloor} 3^{-n} n ( G_{p-n+3} + (-1)^n G_{p-n+1} )
\end{equation}
\begin{equation}
\begin{split}
 \sum_{k=0}^n \binom{n}{k} (-1)^k 3^{-k} k^2 G_{p+k} = & - 3^{-n} n [ 5^{\lfloor (n-1)/2\rfloor} ( G_{p-n+3} + (-1)^n G_{p-n+1} ) \\
 & - (n-1) 5^{\lfloor n/2\rfloor-1} ( G_{p-n+5} - (-1)^n G_{p-n+3} ) ] \\
\end{split}
\end{equation}
\begin{equation}
\begin{split}
 \sum_{k=0}^n & \binom{n}{k} (-1)^k 3^{-k} k^3 G_{p+k} \\
 = & - 3^{-n} n \{ 5^{\lfloor (n-1)/2\rfloor-1} [ 5 ( G_{p-n+3} + (-1)^n G_{p-n+1} ) \\
 & + (n-1)(n-2) ( G_{p-n+7} + (-1)^n G_{p-n+5} ) ] \\
 & - 3 (n-1) 5^{\lfloor n/2\rfloor-1} ( G_{p-n+5} - (-1)^n G_{p-n+3} ) \} \\
\end{split}
\end{equation}
\begin{equation}
 \sum_{k=0}^n \binom{n}{k} (-1)^k 3^{-k} G_{p+2k} = 3^{-n} G_{p-2n}
\end{equation}
\begin{equation}
 \sum_{k=0}^n \binom{n}{k} (-1)^k 3^{-k} k G_{p+2k} = - 3^{-n} n G_{p-2n+4}
\end{equation}
\begin{equation}
 \sum_{k=0}^n \binom{n}{k} (-1)^k 3^{-k} k^2 G_{p+2k} = 3^{-n} n ( n G_{p-2n+8} - 3 G_{p-2n+6} )
\end{equation}
\begin{equation}
\begin{split}
 \sum_{k=0}^n & \binom{n}{k} (-1)^k 3^{-k} k^3 G_{p+2k} \\
 & = - 3^{-n} n [ n^2 G_{p-2n+12} - 9n G_{p-2n+10} + 3 ( 4 G_{p-2n+8} + G_{p-2n+9} ) ] \\
\end{split}
\end{equation}
\begin{equation}
 \sum_{k=0}^n \binom{n}{k} (-1)^k 3^{-k} G_{p+3k} = (-1)^n 2^n 3^{-n} G_{p-n}
\end{equation}
\begin{equation}
 \sum_{k=0}^n \binom{n}{k} (-1)^k 3^{-k} k G_{p+3k} = (-1)^n 2^{n-1} 3^{-n} n G_{p-n+4}
\end{equation}
\begin{equation}
 \sum_{k=0}^n \binom{n}{k} (-1)^k 3^{-k} k^2 G_{p+3k} = (-1)^n 2^{n-2} 3^{-n} n ( n G_{p-n+8} - 3 G_{p-n+5} )
\end{equation}
\begin{equation}
\begin{split}
 \sum_{k=0}^n & \binom{n}{k} (-1)^k 3^{-k} k^3 G_{p+3k} \\
 = & (-1)^n 2^{n-3} 3^{-n} n [ n^2 G_{p-n+12} - 9n G_{p-n+9} + 6( 2 G_{p-n+8} - G_{p-n+7} ) ] \\
\end{split}
\end{equation}

\section{List of Examples of Infinite Series}

\begin{equation}
 \sum_{k=0}^{\infty} 2^{-k} G_{p+k} = 2 G_{p+2}
\end{equation}
\begin{equation}
 \sum_{k=0}^{\infty} 2^{-k} k G_{p+k} = 2 G_{p+5}
\end{equation}
\begin{equation}
 \sum_{k=0}^{\infty} 2^{-k} k^2 G_{p+k} = 2 ( G_{p+7} + G_{p+9} )
\end{equation}
\begin{equation}
 \sum_{k=0}^{\infty} 2^{-k} k^3 G_{p+k} = 2 ( 3 G_{p+12} + G_{p+13} )
\end{equation}
\begin{equation}
 \sum_{k=0}^{\infty} 3^{-k} G_{p+k} = \frac{3}{5} ( G_{p+2} + G_p )
\end{equation}
\begin{equation}
 \sum_{k=0}^{\infty} 3^{-k} k G_{p+k} = \frac{3}{5} G_{p+3}
\end{equation}
\begin{equation}
 \sum_{k=0}^{\infty} 3^{-k} k^2 G_{p+k} = \frac{3}{25} ( 7 G_{p+5} - G_{p+4} )
\end{equation}
\begin{equation}
 \sum_{k=0}^{\infty} 3^{-k} G_{p+2k} = 3 G_{p+2}
\end{equation}
\begin{equation}
 \sum_{k=0}^{\infty} 3^{-k} k G_{p+2k} = 3 G_{p+6}
\end{equation}
\begin{equation}
 \sum_{k=0}^{\infty} 3^{-k} k^2 G_{p+2k} = 3 ( 4 G_{p+8} + G_{p+9} )
\end{equation}
\begin{equation}
 \sum_{k=0}^{\infty} (-1)^k 2^{-k} G_{p+k} = \frac{2}{5} ( G_p + G_{p-2} )
\end{equation}
\begin{equation}
 \sum_{k=0}^{\infty} (-1)^k 2^{-k} k G_{p+k} = -\frac{2}{5} G_{p-1}
\end{equation}
\begin{equation}
 \sum_{k=0}^{\infty} (-1)^k 2^{-k} k^2 G_{p+k} = -\frac{2}{25} ( G_{p-5} + G_{p-3} )
\end{equation}
\begin{equation}
 \sum_{k=0}^{\infty} (-1)^k 3^{-k} G_{p+k} = \frac{3}{11} ( 3 G_p - G_{p-1} )
\end{equation}
\begin{equation}
 \sum_{k=0}^{\infty} (-1)^k 3^{-k} k G_{p+k} = -\frac{3}{121} ( 10 G_{p-1} + 3 G_p )
\end{equation}
\begin{equation}
 \sum_{k=0}^{\infty} (-1)^k 3^{-k} G_{p+2k} = \frac{3}{19} ( 3 G_p + G_{p-2} )
\end{equation}

\pdfbookmark[0]{References}{}

\end{document}